\documentclass{PJM}

\authorone{Amra Reki\' c-Vukovi\' c and Nermin Oki\v ci\' c}
\addressone{Department of Mathematics, Faculty of Natural Sciences and Mathematics, University of Tuzla, Univerzitetska 4, 75000 Tuzla}
\countryone{Bosnia and Herzegovina}
\emailone{amra.rekic@untz.ba, nermin.okicic@untz.ba}
\title{ANOTHER LOOK AT THE CONTINUITY OF MODULI OF NONCOMPACT CONVEXITY}
\headlinetitle{Another look at the continuity}

\begin{document}
\centerline{Communicated  by  Ayman Badawi}
\vskip0.2in
\footnotesize MSC 2010 Classifications: 46B20; 46B22.
\vskip0.1in
Keywords and phrases: modulus of noncompact convexity, minimalizable measure of noncompactness.
\vskip0.1in
\normalsize

 {\bf Abstract} In this article we introduce the new modulus $\triangle'_{X,\phi}(\varepsilon)$, for which we prove that in the general case is different from the classical modulus of noncompact convexity. The main result of the paper is showing the continuity of the modulus of noncompact convexity for arbitrary minimalizable (strictly minimalizable) measure of noncompactness on arbitrary metric space. \vspace*{5mm}

\section{Introduction}
There are many ways to describe geometrical properties of Banach spaces. The most common way is by defining the real function, so called modulus, which depends on the Banach space that we consider. Usually, with modulus we define a proper constant or coefficient that is directly related to the modulus. The value of the coefficient tells us more about the properties of the space. The classical modulus of convexity, that was introduced by Clarkson \cite{clarkson1936uniformly}, that defines uniformly convex spaces is the origin for many other moduli that were introduced later. Similarly, the property of uniform smoothness of Banach spaces was defined using the Lindenstrauss modulus of smoothness \cite{lindenstrauss}. Prus described the uniform Opial property of Banach spaces by using Opial modulus \cite{Prus}. Property of near uniform convexity of Banach spaces was defined by the modulus of noncompact convexity, more precisely by Goebel-Sekowski, Banas and Dominguez-Lopez moduli \cite{Ayerbe_knjiga}. Analysis of the properties of the moduli and their characteristics additionally contributes to understanding geometrical properties of the Banach spaces. In this way we get the classification of Banach spaces and better connection with the theory of fixed point. \\
We know some results for some properties of the modulus $\triangle _{X,\phi}(\varepsilon)$ for an arbitrary (strictly) minimalizable measure of noncompactness $\phi$ and Banach space $X$ with Radon-Nikodym property, as well as the result for continuity of the modulus $\triangle _{X,\phi}(\varepsilon)$ \cite{AIN}, \cite{ANIV}. The result of the continuity was a consequence of the result that Prus gave connecting continuity of the modulus $\triangle _{X,\chi}(\varepsilon)$ to the uniform Opial condition which implies normal structure of the space \cite{Prus}. \\
In this paper, using the notion of the $\phi$-minimal set, we define a new function, i.e. the new modulus  $\triangle'_{X,\phi}(\varepsilon)$. Using properties of the new modulus, we prove continuity of the modulus of noncompact convexity $\triangle _{X,\phi}(\varepsilon)$ on $[0,\phi (B_X))$, for arbitrary minimalizable (strictly minimalizable) measure of noncompactness $\phi$ and arbitrary metric space $X$.
\subsection{Fundamental concepts and definitions}
In this paper $X$ denotes metric space, $B(x, r)$ an open ball centered at $x$ of radius $r$ and $B_X$ the unit ball in X. If $A\subseteq X$ we denote by $\overline{A}$ the closure of a set $A$ and by $coA$ the convex hull of $A$.
\begin{definition}
Let $\mathcal{B}$ be a family of bounded subsets of $X$. We call the mapping $\phi:\mathcal{B}\rightarrow [0,+\infty)$ the measure of noncompactness defined on $X$ if it satisfies the following :
\begin{enumerate}
\item $\phi (B)=0$ if and only if $B$ is relatively compact set,
\item $\phi (B)=\phi (\overline{B})$, for all $B\in \mathcal{B}$,
\item $\phi (B_1\cup B_2)\leq \max (\phi (B_1), \phi (B_2))$, for all $B_1,B_2\in \mathcal{B}$.
\end{enumerate}
\end{definition}
\noindent Some of well known measures of noncompactness are Kuratowski measure $\alpha$,
$$\alpha (A)=\inf \{\varepsilon >0 \ | \ A \textrm{ can be covered by finitely many sets of diameter } \ \leq \varepsilon\} \ ,$$
Hausdorff measure $\chi$,
$$\chi (A)=\inf \{\varepsilon >0 \ | \ A \textrm{ can be covered by finitely many balls of radius } \ \leq \varepsilon\} \ ,$$
and Istratescu measure $\beta$,
$$\beta (A)=\sup \{r>0 \ | \ A \textrm{ has an infinite $r$-separation}\} \ .$$
For more details on this measures see e.g. \cite{Akhmerov}, \cite{Ayerbe_knjiga}.\\
The notion of a $\phi$-minimal set for the measure of noncompactness $\phi$ was introduced by Benavides \cite{benavides1986some}, while studying the relation between condensing operators for the Hausdorff and Kuratowski measure of noncompactness.

\begin{definition}
 Let $X$ be a metric space and let $\mathcal{B}$ be a family of all bounded subsets of $X$. The infinite set $A\in \mathcal{B}$ is called $\phi$-minimal if  $\phi(A)=\phi(B)$ for every infinite set $B\subset A$.
\end{definition}

\noindent We call a measure of noncompactness $\phi$ a minimalizable measure of noncompactness if for every infinite bounded set $A$ and for every $\varepsilon >0 $ there exists a subset $B\subset A$ which is $\phi$-minimal and such that $\phi (B)\geq \phi (A)-\varepsilon$. A measure $\phi$ is a strictly minimalizable measure of noncompactness if for every infinite, bounded set $A$ there exists a subset $B\subset A$, which is $\phi$-minimal and such that $\phi (B)=\phi (A)$. Clearly, every strictly minimalizable measure is a minimalizable measure of noncompactness as well. See e.g. \cite{Akhmerov} and \cite{Ayerbe_knjiga} for more on minimalizable measures of noncompactness.

\begin{definition}
A modulus of noncompact convexity associated with an arbitrary measure of noncompactness $\phi$ is a function $\triangle _{X,\phi}: [0,\phi (B_X)]\rightarrow [0,1]$ given by
\begin{equation}{\label{definicijamodula}}
\triangle _{X,\phi}(\varepsilon)=\inf \{1-d(0,co(A)) \ | \ A\subseteq \overline{B}_X,  \ \phi(A)>\varepsilon \}.
\end{equation}
\end{definition}

\noindent Banas considered a modulus $\triangle _{X,\phi}(\varepsilon)$ for $\phi=\chi$, \cite{Banas}, while Goebel and Sekowski considered the modulus of noncompact convexity associated with the Kuratowski measure $\alpha$, \cite {Goebel-Sekowski}. For $\phi =\beta$,  $\triangle _{X,\beta}(\varepsilon)$  represents the Dominguez-Lopez modulus of noncompact convexity.

\section{Introducing the new modulus}
\begin{definition}
Let $\phi$ be arbitrary measure of noncompactness on a complete metric space $X$. We define the function $\triangle'_{X,\phi}: [0,\phi (B_X)]\rightarrow [0,1]$ by
$$\triangle'_{X,\phi}(\varepsilon)=\inf \{1-d(0,co(A)) \ | \ A\subseteq \overline{B}_X, \ A \textrm{ $\phi$-minimal}, \ \phi(A)>\varepsilon \} \ .$$
\end{definition}

\noindent The modulus $\triangle'_{X,\phi}$ is a well defined function (see Theorem 1.2. \cite{Ayerbe_knjiga}). In the general case, because of the definition of infimum for an arbitrary measure of noncompactness $\phi$ on an arbitrary metric space $X$, we have that
\begin{equation}\label{odnosmodulaimodulaprim}
\triangle_{X,\phi}(\varepsilon)\leq \triangle'_{X,\phi}(\varepsilon) \ .
\end{equation}

\begin{theorem}\label{Tmodulmodulprimstrogaminimizabilnost}
Let $\phi$ be a strictly minimalizable measure of noncompactness on a metric space $(X,d)$. Then
$$\triangle'_{X,\phi}(\varepsilon)=\triangle_{X,\phi}(\varepsilon) \ .$$
\end{theorem}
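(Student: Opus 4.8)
The plan is to combine the already-established inequality with strict minimalizability to get the reverse inequality. By (\ref{odnosmodulaimodulaprim}) we know $\triangle_{X,\phi}(\varepsilon)\le\triangle'_{X,\phi}(\varepsilon)$ automatically, since the infimum defining $\triangle'_{X,\phi}(\varepsilon)$ is taken over a subfamily (only the $\phi$-minimal admissible sets) of the family used for $\triangle_{X,\phi}(\varepsilon)$, and restricting the index family of an infimum can only raise its value. So the whole proof reduces to showing $\triangle'_{X,\phi}(\varepsilon)\le\triangle_{X,\phi}(\varepsilon)$ for every $\varepsilon\in[0,\phi(B_X)]$.

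For that, I would fix $\varepsilon$ and take an arbitrary set $A\subseteq\overline{B}_X$ admissible for $\triangle_{X,\phi}(\varepsilon)$, i.e. with $\phi(A)>\varepsilon$. Since $\varepsilon\ge 0$ this gives $\phi(A)>0$, so $A$ is not relatively compact, hence an infinite bounded set, and strict minimalizability of $\phi$ applies: there is $B\subseteq A$ that is $\phi$-minimal with $\phi(B)=\phi(A)>\varepsilon$. As $B\subseteq A\subseteq\overline{B}_X$, this $B$ is an admissible competitor in the definition of $\triangle'_{X,\phi}(\varepsilon)$. Now I would invoke the monotonicity of the convex hull and of the distance functional: $B\subseteq A$ forces $co(B)\subseteq co(A)$, hence $d(0,co(B))\ge d(0,co(A))$, that is $1-d(0,co(B))\le 1-d(0,co(A))$. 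Therefore $\triangle'_{X,\phi}(\varepsilon)\le 1-d(0,co(B))\le 1-d(0,co(A))$, and taking the infimum over all admissible $A$ yields $\triangle'_{X,\phi}(\varepsilon)\le\triangle_{X,\phi}(\varepsilon)$. Together with (\ref{odnosmodulaimodulaprim}) this gives the asserted equality. If for some $\varepsilon$ the admissible family is empty — which can only happen at the endpoint $\varepsilon=\phi(B_X)$ — both moduli equal the infimum of the empty set in $[0,1]$ and trivially coincide, so nothing is lost.

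I do not expect a real obstacle: the argument is essentially bookkeeping with infima. The one point that has to be handled with care is verifying that an admissible set $A$ is automatically infinite, so that the definition of strict minimalizability is legitimately applicable to it — and this is precisely where the hypothesis $\phi(A)>\varepsilon\ge 0$ (equivalently, the nontrivial range $[0,\phi(B_X))$) is used. Everything else is the elementary monotonicity of the maps $A\mapsto co(A)$ and $C\mapsto d(0,C)$ together with the behaviour of infima under passing to a subfamily.
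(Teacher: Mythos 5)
Your proof is correct and follows essentially the same route as the paper's: both reduce the claim to the inequality $\triangle'_{X,\phi}(\varepsilon)\leq\triangle_{X,\phi}(\varepsilon)$, obtained by passing from an admissible $A$ to a $\phi$-minimal $B\subseteq A$ with $\phi(B)=\phi(A)$ via strict minimalizability and then using $co(B)\subseteq co(A)$. The only (cosmetic) difference is that you bound $\triangle'_{X,\phi}(\varepsilon)$ directly by $1-d(0,co(A))$ for every admissible $A$ and then take the infimum, whereas the paper works with an $\eta$-near-minimizer; your version also has the merit of explicitly checking that admissible sets are infinite, which the paper leaves implicit.
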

\begin{proof}
Let $\phi$ be a strictly minimalizable measure of noncompactness and let $\eta >0$ be arbitrary. For arbitrary $\varepsilon \in [0,\phi(\overline{B}_X)]$, there exists $A\subseteq \overline{B}_X$, such that $\phi(A)>\varepsilon$ and
$$\triangle_{X,\phi}(\varepsilon)+\eta > 1-d(0,A).$$
Since $\phi$ is strictly minimalizable, there exists a $\phi$-minimal set $B\subset A$, such that $\phi(B)=\phi(A)$. Besides, we have  $co (B)\subseteq co (A)$, so that
$$1-d(0,co (B))\leq 1-d(0,co (A))<\triangle_{X,\phi}(\varepsilon)+\eta .$$
If we take the infimum over all the $\phi$-minimal sets $B$, such that $\phi(B)>\varepsilon$, we get that
$$\triangle'_{X,\phi}(\varepsilon)\leq \triangle_{X,\phi}(\varepsilon) +\eta  \ .$$
Since this  holds for arbitrary $\eta >0$, we have that
\begin{equation}\label{nejednakost1}
\triangle'_{X,\phi}(\varepsilon) \leq \triangle_{X,\phi}(\varepsilon) \ .
\end{equation}
Using (\ref{odnosmodulaimodulaprim}) and (\ref{nejednakost1}) we get the required equality.
\end{proof}

\begin{theorem}\label{Tmodulmodulprimminimizabilnost}
Let $\phi$ be a minimalizable measure of noncompactness on a metric space $(X,d)$. Then we have that
$$\triangle_{X,\phi}(\varepsilon)= \triangle'_{X,\phi}(\varepsilon) \ ,$$
for all $\varepsilon \in \left[0,\phi(\overline{B}_X)\right]$.
\end{theorem}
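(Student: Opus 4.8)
The plan is to mirror the proof of Theorem~\ref{Tmodulmodulprimstrogaminimizabilnost}, the only new ingredient being a careful choice of the error parameter in the definition of a minimalizable measure so that the \emph{strict} inequality $\phi(B)>\varepsilon$ survives the passage to a $\phi$-minimal subset. Since inequality (\ref{odnosmodulaimodulaprim}) already gives $\triangle_{X,\phi}(\varepsilon)\le\triangle'_{X,\phi}(\varepsilon)$, it suffices to establish the reverse inequality $\triangle'_{X,\phi}(\varepsilon)\le\triangle_{X,\phi}(\varepsilon)$ for every $\varepsilon\in[0,\phi(\overline B_X))$.

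First I would fix such an $\varepsilon$ and an arbitrary $\eta>0$, and use the definition of $\triangle_{X,\phi}(\varepsilon)$ as an infimum to choose $A\subseteq\overline B_X$ with $\phi(A)>\varepsilon$ and $1-d(0,co(A))<\triangle_{X,\phi}(\varepsilon)+\eta$. Since $\phi(A)>\varepsilon\ge 0$, the set $A$ is not relatively compact, hence infinite, so minimalizability applies to it. Now set $\rho:=\phi(A)-\varepsilon>0$ and invoke minimalizability of $\phi$ with error $\rho/2$: there is a $\phi$-minimal set $B\subset A$ with
$$\phi(B)\ge\phi(A)-\tfrac{\rho}{2}=\varepsilon+\tfrac{\rho}{2}>\varepsilon,$$
so that $B$ is admissible in the infimum defining $\triangle'_{X,\phi}(\varepsilon)$.

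Next, from $B\subset A$ we get $co(B)\subseteq co(A)$, hence $d(0,co(B))\ge d(0,co(A))$, and therefore
$$\triangle'_{X,\phi}(\varepsilon)\le 1-d(0,co(B))\le 1-d(0,co(A))<\triangle_{X,\phi}(\varepsilon)+\eta.$$
Letting $\eta\to 0^+$ yields $\triangle'_{X,\phi}(\varepsilon)\le\triangle_{X,\phi}(\varepsilon)$, and together with (\ref{odnosmodulaimodulaprim}) this gives the claimed equality on $[0,\phi(\overline B_X))$; the endpoint $\varepsilon=\phi(\overline B_X)$ is treated with the same convention already in force for $\triangle_{X,\phi}$ (the family of admissible sets is empty there, or one passes to the limit), so it requires no separate argument.

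As for the main obstacle: there is essentially no deep difficulty, and that is the point worth stressing — the strict inequality $\phi(A)>\varepsilon$ leaves a positive gap $\rho$ into which the minimalizability error can be absorbed, which is precisely why merely minimalizable (rather than strictly minimalizable) measures already suffice. The only thing to be careful about is the bookkeeping of the two independent error parameters, namely $\eta$ (controlling how well $A$ approximates the infimum) and $\rho/2$ (controlling the minimalizable approximation), so that the extracted set $B$ is genuinely feasible for $\triangle'_{X,\phi}(\varepsilon)$ and the final limit in $\eta$ is legitimate.
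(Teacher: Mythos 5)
Your proposal is correct and follows essentially the same route as the paper's proof: extract a near-optimal $A$ for $\triangle_{X,\phi}(\varepsilon)$, apply minimalizability with error $\tfrac{1}{2}(\phi(A)-\varepsilon)$ to get a $\phi$-minimal $B\subset A$ still satisfying $\phi(B)>\varepsilon$, use $co(B)\subseteq co(A)$, and let $\eta\to 0^+$, combining with (\ref{odnosmodulaimodulaprim}). Your explicit remark that $\phi(A)>0$ forces $A$ to be infinite (so that minimalizability applies) is a small point the paper leaves implicit.
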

\begin{proof}
Let $\varepsilon \in \left[0,\phi(\overline{B}_X)\right]$ and $\eta >0$ be arbitrary. By the definition of the modulus $\triangle_{X,\phi}(\varepsilon)$, there exists $A^*\subseteq \overline{B}_X$, such that $\phi(A^*)>\varepsilon$ and
$$\triangle_{X,\phi}(\varepsilon)+\eta > 1-d(0,A^*) \ .$$
Let $\displaystyle \delta =\frac{\phi (A)-\varepsilon}{2}>0$. Since $\phi$ is minimalizable, there exists a $\phi$-minimal set $B\subset A^*$, such that
$$\phi(B)\geq \phi(A^*)-\delta=\frac{2\phi (A)+\varepsilon}{2}>\varepsilon$$
and
$$1-d(0,co (B))\leq \triangle_{X,\phi}(\varepsilon)+\eta \ .$$
If we take the infimum over all $\phi$-minimal sets $B$, such that $B \subseteq \overline{B}_X$ and $\phi(B)>\varepsilon$, we have that
$$\triangle'_{X,\phi}(\varepsilon)\leq \triangle_{X,\phi}(\varepsilon)+\eta \ .$$
Since $\eta$ is arbitrary, we conclude that
\begin{equation}\label{nejednakost2}
\triangle'_{X,\phi}(\varepsilon)\leq \triangle_{X,\phi}(\varepsilon) \ .
\end{equation}
Using (\ref{odnosmodulaimodulaprim}) and (\ref{nejednakost2}) we get the required equality.
\end{proof}

Let $X=l_p$ ($2\leq p <\infty$) be the space of $p$-summable sequences. Since every $\alpha$-minimal set is $\beta$-minimal (Lemma 2.9 \cite{Ayerbe_knjiga}), and measure $\beta$ is minimalizable measure of noncompactness on the space $l_p$, by the explicit expressions for $\triangle_{l_p,\beta}(\varepsilon)$ (Theorem 1.16.  \cite{Ayerbe_knjiga}) and  $\triangle_{l_p,\alpha}(\varepsilon)$ (Remark 1.17. \cite{Ayerbe_knjiga}), we have that
\begin{align*}
\triangle'_{l_p,\alpha}(\varepsilon)& = \inf \{1-d(0,co(A)) \ | \ A\subseteq \overline{B}_{l_p}, \ A \textrm{ $\alpha$-minimal}, \ \alpha(A)>\varepsilon \}\\
& \geq \inf \{1-d(0,co(A)) \ | \ A\subseteq \overline{B}_{l_p}, \ A \textrm{ $\beta$-minimal}, \ \beta(A)>\varepsilon \}\\
& = \triangle'_{l_p,\beta}(\varepsilon)=\triangle_{l_p,\beta}(\varepsilon)\\
& = 1-\sqrt[p]{1-\frac{\varepsilon^p}{2}}\\
& > 1-\sqrt[p]{1-\left(\frac{\varepsilon}{2}\right)^p}=\triangle_{l_p,\alpha}(\varepsilon) \ .
\end{align*}
This confirms the fact that in (\ref{odnosmodulaimodulaprim}) strict inequality can hold. This also justifies introducing the new modulus.

\begin{lemma}
Let $X$ be a separable metric space. Then we have that
$$\triangle_{X,\chi}(\varepsilon)=\triangle'_{X,\chi}(\varepsilon) \ .$$
\end{lemma}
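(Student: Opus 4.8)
Since $\triangle_{X,\chi}(\varepsilon)\le\triangle'_{X,\chi}(\varepsilon)$ is already recorded in (\ref{odnosmodulaimodulaprim}), the plan is to prove the reverse inequality $\triangle'_{X,\chi}(\varepsilon)\le\triangle_{X,\chi}(\varepsilon)$, and the cleanest route is to reduce it to Theorem \ref{Tmodulmodulprimminimizabilnost}. I would argue that, because $X$ is separable, the Hausdorff measure $\chi$ is a minimalizable (in fact strictly minimalizable) measure of noncompactness on $X$; granting this, Theorem \ref{Tmodulmodulprimminimizabilnost} (or Theorem \ref{Tmodulmodulprimstrogaminimizabilnost}) applies verbatim with $\phi=\chi$ and yields the equality at once. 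If one prefers a self-contained argument, one simply repeats the proof of Theorem \ref{Tmodulmodulprimminimizabilnost} for $\phi=\chi$: for $\eta>0$ choose $A\subseteq\overline{B}_X$ with $\chi(A)>\varepsilon$ and $1-d(0,co(A))<\triangle_{X,\chi}(\varepsilon)+\eta$, use minimalizability to extract a $\chi$-minimal $B\subset A$ with $\chi(B)>\varepsilon$, observe $co(B)\subseteq co(A)$, so $1-d(0,co(B))\le 1-d(0,co(A))<\triangle_{X,\chi}(\varepsilon)+\eta$, take the infimum over all such $B$, and let $\eta\downarrow 0$.

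The whole content therefore sits in the claim that $\chi$ is minimalizable on a separable metric space: for every infinite bounded $A\subseteq X$ and every $\delta>0$ there is a $\chi$-minimal $B\subset A$ with $\chi(B)\ge\chi(A)-\delta$ (used above with $\delta<\chi(A)-\varepsilon$). Here I would first pass to a countable dense sequence $\{a_n\}$ of pairwise distinct points of $A$; since $\overline{\{a_n\}}=\overline A$, we have $\chi(\{a_n\})=\chi(A)$, so it suffices to extract the desired set inside the countable set $\{a_n\}$. Countability is exactly what makes the extraction go through: a decreasing sequence of infinite subsets of a countable set always admits an infinite pseudo-intersection, and deleting finitely many points alters neither the value of $\chi$ nor the condition ``$\chi(\cdot)\ge\chi(A)-\delta$''. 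Using this, one performs a diagonal extraction which isolates a ``large homogeneous'' part, i.e.\ an infinite $B\subseteq\{a_n\}$ all of whose infinite subsets have one and the same $\chi$-value, necessarily $\ge\chi(A)-\delta$ --- that is, a $\chi$-minimal set of the required size. This minimalizability (indeed strict minimalizability) of $\chi$ on separable metric spaces is in fact a known property; see \cite{Ayerbe_knjiga}, \cite{Akhmerov}, and \cite{benavides1986some} concerning $\phi$-minimal sets.

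The step I expect to be the main obstacle is precisely the size control in this extraction. Producing \emph{some} $\chi$-minimal infinite subset of $A$ is routine, but naive choices throw away almost all of the measure: a $\rho$-separated subsequence with $\rho$ close to $\chi(A)$ only yields $\chi(B)\ge\rho/2$, and shrinking $A$ towards $\inf\{\chi(S):S\subseteq A\textrm{ infinite}\}$ typically pushes $\chi(B)$ down to the low-measure part of $A$ instead of keeping it near $\chi(A)$. Keeping the extracted $\chi$-minimal set of $\chi$-value exceeding $\varepsilon$ is exactly where separability is indispensable --- in a non-separable ambient space the nested infinite subsets occurring in such extractions may meet in a finite set and the limit can collapse in measure. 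Once minimalizability of $\chi$ on $X$ is secured, nothing further is needed: the convex-hull comparison and the infimum manipulation are literally those of Theorems \ref{Tmodulmodulprimstrogaminimizabilnost}--\ref{Tmodulmodulprimminimizabilnost}.
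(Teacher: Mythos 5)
Your proposal is correct and follows essentially the same route as the paper: the paper likewise invokes Benavides' result that in a separable space every infinite bounded $A$ contains a $\chi$-minimal subset $B$ with $\chi(B)=\chi(A)$ (i.e.\ $\chi$ is strictly minimalizable there), and then runs the same convex-hull/infimum argument as in Theorems \ref{Tmodulmodulprimstrogaminimizabilnost} and \ref{Tmodulmodulprimminimizabilnost}. Your extra sketch of the diagonal extraction is not needed (and is the only hand-wavy part), since, like the paper, you can simply cite \cite{benavides1986some} for that fact.
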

\begin{proof}
Let $\varepsilon \in [0, \chi(B_X)]$ be arbitrary. By the definition of the modulus of noncompactness, for $\eta >0$ there exists $A\subset \overline{B}_X$, such that $\chi(A)>\varepsilon$ and
$$1-d(0, co(A))< \triangle_{X,\chi}(\varepsilon) +\eta \ .$$
Hence there exists a $\chi$-minimal set $B\subset A$, such that $\chi(A)=\chi(B)$ (\cite{benavides1986some}). For the set $B$, we have that the following inequalities hold
$$1-d(0, co(B))\leq 1-d(0, co(A))<\triangle_{X,\chi}(\varepsilon) +\eta \ .$$
If in the last relation we take the infimum over all the $\chi$-minimal sets $B$, such that $\chi(B)>\varepsilon$, we have that
$$\triangle'_{X,\chi}(\varepsilon)\leq \triangle_{X,\chi}(\varepsilon)+\eta \ .$$
Since $\eta>0$ is arbitrary, we get
$$\triangle'_{X,\chi}(\varepsilon)\leq \triangle_{X,\chi}(\varepsilon) \ .$$
If we apply (\ref{odnosmodulaimodulaprim}) for the measure $\chi$, then by the last inequality, we get the result.
\end{proof}

Relations between the moduli of noncompactness, associated with the standard measures of noncompactness $\alpha, \chi$ and $\beta$, and modulus of convexity are well known,
\begin{equation}\label{produzenanejednakostmodula}
\delta_X(\varepsilon)\leq \triangle_{X,\alpha}(\varepsilon) \leq \triangle_{X,\beta}(\varepsilon) \leq \triangle_{X,\chi}(\varepsilon) \ ,
\end{equation}
where $\delta_X$ is the modulus of convexity. Since the measure of noncompactness $\beta$ is minimalizable on every complete metric space, by the Theorem \ref{Tmodulmodulprimminimizabilnost} we have that $\triangle_{X,\beta}(\varepsilon)= \triangle'_{X,\beta}(\varepsilon)$. If we apply (\ref{odnosmodulaimodulaprim}) for the measure $\chi$, we get the following relations:
$$\triangle_{X,\alpha}(\varepsilon)\leq \triangle'_{X,\beta}(\varepsilon) \leq \triangle'_{X,\chi}(\varepsilon) \ .$$

\begin{lemma}{\label{Lnejednakostmodulaprimalfabeta}}
For the measures $\alpha $ and $\beta$ we have that
$$\triangle'_{X,\beta}(\varepsilon) \leq \triangle'_{X,\alpha}(\varepsilon). $$
\end{lemma}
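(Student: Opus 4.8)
The plan is to prove the inequality by the same ``enlarging the index family'' mechanism that was already used for $X=l_p$ in the computation following Theorem~\ref{Tmodulmodulprimminimizabilnost}: show that every set $A$ which is admissible in the infimum defining $\triangle'_{X,\alpha}(\varepsilon)$ is also admissible in the infimum defining $\triangle'_{X,\beta}(\varepsilon)$. Since the latter infimum is then taken over a (weakly) larger collection, it can only be smaller, which is exactly the assertion of the lemma. Concretely, I would fix $\varepsilon$ in the common domain, take an arbitrary $\alpha$-minimal set $A\subseteq\overline B_X$ with $\alpha(A)>\varepsilon$, and try to verify that $A$ is $\beta$-minimal with $\beta(A)>\varepsilon$.

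The $\beta$-minimality of $A$ is immediate from Lemma~2.9 of \cite{Ayerbe_knjiga}, so the only genuine point is to control the \emph{value}, i.e.\ to show $\beta(A)>\varepsilon$ and not merely $\beta(A)\le\alpha(A)$. For this I would establish the equality $\beta(A)=\alpha(A)$ for every $\alpha$-minimal $A$. One inequality, $\beta(A)\le\alpha(A)$, is part of the standard theory. For the reverse inequality I would argue as follows: if $B\subseteq A$ is infinite with $\mathrm{diam}(B)<\alpha(A)$, then covering $B$ by itself gives $\alpha(B)\le\mathrm{diam}(B)<\alpha(A)$, contradicting $\alpha$-minimality; hence no infinite subset of $A$ has diameter below $\alpha(A)$. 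Now apply the infinite Ramsey theorem to the coloring of $[A]^2$ in which a pair $\{x,y\}$ gets color $0$ if $d(x,y)\le r$ and color $1$ if $d(x,y)>r$: for any $r<\alpha(A)$ the homogeneous infinite subset cannot be of color $0$ (that would be an infinite subset of diameter $\le r<\alpha(A)$), so it is an infinite $r$-separated subset of $A$, whence $\beta(A)\ge r$. Letting $r\uparrow\alpha(A)$ gives $\beta(A)\ge\alpha(A)$, so $\beta(A)=\alpha(A)>\varepsilon$.

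With this in hand the proof closes quickly: $A$ is now a $\beta$-minimal subset of $\overline B_X$ with $\beta(A)>\varepsilon$, so by the definition of $\triangle'_{X,\beta}$ we have $\triangle'_{X,\beta}(\varepsilon)\le 1-d(0,co(A))$. Taking the infimum of the right-hand side over all $\alpha$-minimal sets $A\subseteq\overline B_X$ with $\alpha(A)>\varepsilon$ yields $\triangle'_{X,\beta}(\varepsilon)\le\triangle'_{X,\alpha}(\varepsilon)$. (To avoid any discussion of $\inf\emptyset$ one simply restricts to the range of $\varepsilon$ where the admissible family for $\triangle'_{X,\alpha}$ is nonempty, exactly as is implicitly done elsewhere in the paper.)

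The step I expect to be the real obstacle is precisely the passage from ``$\alpha$-minimal $\Rightarrow$ $\beta$-minimal'' to the sharper fact $\alpha(A)=\beta(A)$ on $\alpha$-minimal sets: without it an $\alpha$-minimal set with $\alpha(A)>\varepsilon$ need not a priori satisfy $\beta(A)>\varepsilon$, and then it could not be inserted into the infimum defining $\triangle'_{X,\beta}(\varepsilon)$. Everything after that equality is the soft ``larger index family gives a smaller infimum'' argument, identical in spirit to the $l_p$ computation already present in the text.
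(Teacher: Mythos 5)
Your proof is correct and follows essentially the same route as the paper: both reduce the lemma to the fact that every $\alpha$-minimal set is $\beta$-minimal with $\alpha(A)=\beta(A)$ (Lemma 2.9 of \cite{Ayerbe_knjiga}) and then observe that the infimum defining $\triangle'_{X,\beta}(\varepsilon)$ runs over a larger admissible family, hence is no bigger. The only difference is that you supply a self-contained Ramsey-theoretic proof of that auxiliary fact, where the paper simply cites it.
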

\begin{proof}
If $A$ is  an $\alpha$-minimal set, then $A$ is also a $\beta$-minimal set and $\alpha (A)=\beta (A)$. Furthermore,
\begin{align*}
\triangle'_{X,\alpha}(\varepsilon) & = \inf\{ 1-d(0,co (A)): \ A\subseteq \overline{B}_{X}, \ A \textrm{ $\alpha$-minimal}, \ \alpha(A)>\varepsilon \} \\
 & \geq \inf \{ 1-d(0,co (A)): \ A\subseteq \overline{B}_{X}, \ A \textrm{ $\beta$-minimal}, \ \beta(A)>\varepsilon \} \\
 & =\triangle'_{X,\beta}(\varepsilon).
\end{align*}
\end{proof}

If we assume that $\alpha$ is a minimalizable measure of noncompactness on a space $X$, then by the Lemma \ref{Lnejednakostmodulaprimalfabeta} we can more precisely formulate the relation between the moduli for considered measures $\alpha, \beta$ and $\chi$. We have that
\begin{equation}\label{odnosmodulaprim2}
\triangle'_{X,\beta}(\varepsilon)= \triangle'_{X,\alpha}(\varepsilon) \leq \triangle'_{X,\chi}(\varepsilon) \ .
\end{equation}

\begin{lemma}\label{Lodnosmodulaprimalfahi}
For the measures $\alpha $ and $\chi$ we have that
\begin{equation}\label{odnosmodulaprim2}
\triangle'_{X,\alpha}(\varepsilon)\leq \triangle'_{X,\chi}(\varepsilon)
\end{equation}
\end{lemma}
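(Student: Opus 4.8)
The aim is to estimate $\triangle'_{X,\alpha}(\varepsilon)$ from above by $1-d(0,co(A))$ for an arbitrary $\chi$-minimal set $A\subseteq\overline{B}_X$ with $\chi(A)>\varepsilon$, and then to pass to the infimum over all such $A$, which is exactly $\triangle'_{X,\chi}(\varepsilon)$. Since we cannot assume that $A$ itself is $\alpha$-minimal, the plan is to replace $A$ by a suitable infinite subset $B\subseteq A$. As $B\subseteq A$ gives $co(B)\subseteq co(A)$ and hence $d(0,co(B))\geq d(0,co(A))$, it is enough to find an infinite $B\subseteq A$ which is $\alpha$-minimal and still satisfies $\alpha(B)>\varepsilon$; then $B$ is admissible in the infimum defining $\triangle'_{X,\alpha}(\varepsilon)$, so $\triangle'_{X,\alpha}(\varepsilon)\leq 1-d(0,co(B))\leq 1-d(0,co(A))$, and passing to the infimum over $A$ finishes the proof.

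First I would reduce to a countable situation: since $A$ is infinite, choose a sequence $(a_n)$ of distinct points of $A$, and extract from it a subsequence $(a_{n_k})$ along which the pairwise distances converge, $d(a_{n_i},a_{n_j})\to c$ as $i,j\to\infty$ for some $c\geq 0$. This is obtained by applying the infinite Ramsey theorem successively to the two-valued colourings of pairs that send $\{i,j\}$ to $1$ if $d(a_i,a_j)<t$ and to $0$ otherwise, with $t$ running through a countable dense set of thresholds, followed by a diagonalisation over the thresholds. Put $B=\{a_{n_k}:k\geq 1\}$. Every infinite $C\subseteq B$ still has $d(a_{n_i},a_{n_j})\to c$ along $C$, and a short argument with finite covers then shows $\alpha(C)=c$: a tail of $C$ has diameter at most $c+\eta$ for every $\eta>0$, which gives $\alpha(C)\leq c$ by covering with one set together with finitely many singletons, while all but finitely many points of $C$ are pairwise more than $c-\eta$ apart, so that no finite family of sets of diameter less than $c$ can cover $C$, giving $\alpha(C)\geq c$. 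Hence $B$ is $\alpha$-minimal with $\alpha(B)=c$.

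It remains to check that $\alpha(B)>\varepsilon$. Since $B$ is an infinite subset of the $\chi$-minimal set $A$, we have $\chi(B)=\chi(A)>\varepsilon$, and from the elementary inequality $\chi(B)\leq\alpha(B)$ it follows that $\alpha(B)=c\geq\chi(B)>\varepsilon$. Thus $B\subseteq\overline{B}_X$ is an $\alpha$-minimal set with $\alpha(B)>\varepsilon$, so the chain of inequalities above gives $\triangle'_{X,\alpha}(\varepsilon)\leq 1-d(0,co(A))$ for every $\chi$-minimal $A\subseteq\overline{B}_X$ with $\chi(A)>\varepsilon$; taking the infimum over all such $A$ yields $\triangle'_{X,\alpha}(\varepsilon)\leq\triangle'_{X,\chi}(\varepsilon)$.

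The routine ingredients here are the monotonicity $d(0,co(B))\geq d(0,co(A))$ for $B\subseteq A$, the inequality $\chi\leq\alpha$, and the invariance of $\chi$ on infinite subsets of a $\chi$-minimal set. The step I expect to be the main obstacle is the extraction and its verification: organising the Ramsey/diagonal argument so that the pairwise distances genuinely converge to a single value $c$ and do not merely stabilise relative to each individual threshold, and then proving that the extracted set $B$ is $\alpha$-minimal with $\alpha$-value exactly $c$, that is, establishing both covering estimates $\alpha(C)\leq c$ and $\alpha(C)\geq c$ for every infinite $C\subseteq B$.
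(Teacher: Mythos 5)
Your proof is correct, and it is organized differently from the paper's. The paper argues by contradiction: it supposes $\triangle'_{X,\alpha}(\varepsilon_0)>\triangle'_{X,\chi}(\varepsilon_0)$ for some $\varepsilon_0$, picks a $\chi$-minimal $A^*$ witnessing this, asserts (without proof) that the bounded infinite set $A^*$, if not itself $\alpha$-minimal, contains an $\alpha$-minimal subset $C^*$, and then rules out both $\alpha(C^*)>\varepsilon_0$ (using monotonicity of $1-d(0,co(\cdot))$ under passing to subsets) and $\alpha(C^*)\leq\varepsilon_0$ (via $\varepsilon_0<\chi(A^*)=\chi(C^*)\leq\alpha(C^*)$, which is exactly your key inequality). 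Your direct version uses the same two essential ingredients --- monotonicity of $1-d(0,co(\cdot))$ on subsets, and the fact that any infinite subset of a $\chi$-minimal set $A$ keeps $\chi$-value equal to $\chi(A)>\varepsilon$ and hence has $\alpha$-value $>\varepsilon$ --- but it avoids the case split and, more importantly, actually constructs the infinite $\alpha$-minimal subset: the Ramsey/diagonal extraction of a sequence whose mutual distances converge to a single limit $c$, followed by the two covering estimates showing $\alpha(C)=c$ for every infinite subset $C$, is precisely the standard proof that every infinite bounded subset of a metric space contains an infinite $\alpha$-minimal subset. That existence statement is the one genuinely nontrivial step of the argument, and the published proof simply asserts it, so your write-up supplies a detail the paper leaves out. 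The remaining checks all go through: $A$ is infinite because $\chi(A)>\varepsilon\geq 0$ rules out relative compactness, the pairwise distances are bounded since $A\subseteq\overline{B}_X$, and $c=\alpha(B)\geq\chi(B)=\chi(A)>\varepsilon$ makes $B$ admissible in the infimum defining $\triangle'_{X,\alpha}(\varepsilon)$.
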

\begin{proof}
Assume that there exists $\varepsilon_0 \in [0,\alpha(B_X))$, such that
\begin{equation*}
\triangle'_{X,\alpha}(\varepsilon_0)> \triangle'_{X,\chi}(\varepsilon_0) \ .
\end{equation*}
By the definition of the modulus $\triangle'_{X,\chi}(\varepsilon)$, there exists a $\chi$-minimal set $A^*\subset\overline{B}_X$, such that $\chi(A^*)>\varepsilon_0$ and
\begin{equation*}
\triangle'_{X,\alpha}(\varepsilon_0)> 1-d(0,co(A^*)) \ .
\end{equation*}
Because of the property of the infimum, for all $\alpha$-minimal sets $A\subset\overline{B}_X$, such that $\alpha(A)>\varepsilon_0$ we have that
\begin{equation}\label{vaznanejednakostuodnosmodulaprim2}
1-d(0,co(A))>1-d(0,co(A^*)) \ .
\end{equation}
If we assume that $A^*$ is $\alpha$-minimal, then since $\alpha(A^*)\geq \chi(A^*)>\varepsilon_0$ holds, inequality (\ref{vaznanejednakostuodnosmodulaprim2}) is true for the set $A^*$, which is impossible. Therefore, $A^*$ is not an $\alpha$-minimal set. This means that there exists an infinite set $B^*\subset A^*$, such that $\alpha(B^*)<\alpha(A^*)$. Since the set $B^*$ is bounded, there exists a set $C^*\subset B^*$ which is $\alpha$-minimal.\\
Assuming that $\alpha(C^*)>\varepsilon_0$ and applying (\ref{vaznanejednakostuodnosmodulaprim2}) for the set $C^*$, we get that
$$1-d(0,co(C^*))>1-d(0,co(A^*)) \ ,$$
which is not possible since $C^*\subset A^*$.\\
Assuming that $\alpha(C^*)\leq \varepsilon_0$ and since the set $C^*$ is a subset of the $\chi$-minimal set, we have that
$$\varepsilon_0<\chi(A^*)=\chi(C^*)\leq \alpha(C^*)\leq \varepsilon_0  \ .$$
Once again this is a contradiction. We conclude that the initial assumption is not sustainable.
\end{proof}

Using the Lemma \ref{Lnejednakostmodulaprimalfabeta} and Lemma \ref{Lodnosmodulaprimalfahi} we conclude that in the general case of the complete metric space $X$, we have that
$$\triangle'_{X,\beta}(\varepsilon) \leq \triangle'_{X,\alpha}(\varepsilon)\leq \triangle'_{X,\chi}(\varepsilon) \ .$$
Generally, using (\ref{odnosmodulaimodulaprim}) and (\ref{produzenanejednakostmodula}) we have that
\begin{equation}\label{generalnanejednakostmodulaprim}
\delta_X(\varepsilon)\leq \triangle'_{X,\beta}(\varepsilon) \leq \triangle'_{X,\alpha}(\varepsilon)\leq \triangle'_{X,\chi}(\varepsilon) \ .
\end{equation}

\begin{lemma}\label{L7}
Let $1\leq p<\infty$. If a bounded set is $\alpha$-minimal in the $l_p$ space, then it is $\chi$-minimal in the $l_p$ space.
\end{lemma}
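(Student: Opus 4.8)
The plan is to reduce the statement to a single quantitative fact about $l_p$: on bounded subsets of $l_p$ the Istratescu and Hausdorff measures of noncompactness are proportional, namely $\beta(S)=2^{1/p}\chi(S)$ for every bounded $S\subseteq l_p$. So first I would dispose of the trivial case in which $A$ is relatively compact (then $\alpha(A)=\chi(A)=0$, every infinite subset of $A$ is again relatively compact, and $\chi$-minimality is clear), and assume $\alpha(A)=:a>0$. By Lemma~2.9 in~\cite{Ayerbe_knjiga} the $\alpha$-minimal set $A$ is then also $\beta$-minimal and $\beta(A)=\alpha(A)=a$.

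Granting the proportionality $\beta=2^{1/p}\chi$, the lemma follows in one line: if $B\subseteq A$ is infinite, then $B$ is $\beta$-minimal as a subset of a $\beta$-minimal set, hence $\beta(B)=\beta(A)$, and therefore $\chi(B)=2^{-1/p}\beta(B)=2^{-1/p}\beta(A)=\chi(A)$; since $B$ was an arbitrary infinite subset of $A$, this is precisely $\chi$-minimality of $A$.

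The bulk of the work is thus the proportionality. Writing $P_n$ for the projection of $l_p$ onto the first $n$ coordinates and $P_n^{\perp}=I-P_n$, I would first establish the formula $\chi(S)=\lim_n\sup_{x\in S}\|P_n^{\perp}x\|$ (the right-hand quantity is nonincreasing in $n$; ``$\le$'' comes from $S\subseteq P_nS+P_n^{\perp}S$, subadditivity of $\chi$ and $\chi(P_nS)=0$; ``$\ge$'' from a finite cover of $S$ by balls of radius close to $\chi(S)$ whose centres have tails tending to $0$). Then I would use the block (gliding-hump) decomposition: a bounded sequence in $l_p$ has, after passing to a subsequence, the form $x_{n_k}=w+u_k+t_k$ with $w\in l_p$ the coordinatewise limit, the $u_k$ successively supported on pairwise disjoint blocks of coordinates, $\|t_k\|\to 0$ and $\|u_k\|\to s$ for some $s\ge 0$; then $\|x_{n_k}-x_{n_l}\|\to 2^{1/p}s$, and $\sup_{x\in S}\|P_{m}^{\perp}x\|\ge\|u_k\|-o(1)$ where $m$ is the left endpoint of the block carrying $u_k$. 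Applying this to an infinite subset of $S$ whose separation is close to $\beta(S)$ yields $\beta(S)\le 2^{1/p}\chi(S)$; applying it to a sequence $x^{(n)}\in S$ with $\|P_n^{\perp}x^{(n)}\|$ almost as large as $\chi(S)$ yields the reverse inequality.

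The step I expect to be hardest is the inequality $\chi(S)\le 2^{-1/p}\beta(S)$ (equivalently $\sup_{x\in S}\|P_n^{\perp}x\|\to 2^{-1/p}\beta(S)$). The soft approach---extracting from a set on which $\chi$ is large a highly separated subsequence contradicting $\beta$-minimality---does not work, since in an $\alpha$-minimal set every separated subsequence has separation at most $\beta(A)=a$, which is fully compatible with minimality; one must genuinely use the block-basis geometry of $l_p$ (equivalently, that the Jung constant of $l_p$ is $2^{-1/p}$) to bound $\chi$ from above. For $p=1$ the argument is the same with $2^{1/p}=2$; in that case one even gets $\alpha(S)=2\chi(S)$ on all bounded sets (since $\beta\le\alpha\le 2\chi=\beta$), so the conclusion is immediate.
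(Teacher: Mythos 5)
Your argument is correct, and its skeleton is the same as the paper's: both proofs convert $\alpha$-minimality into $\chi$-minimality through the proportionality constant $2^{1/p}$ linking measures of noncompactness in $l_p$. The difference is where that constant comes from. The paper's proof is essentially a citation: it invokes Corollary 4.5 of \cite{Ayerbe_knjiga} to write $\alpha(B)=2^{1/p}\chi(B)$ for the $\alpha$-minimal set $A$ and for each infinite $B\subseteq A$ (each such $B$ being $\alpha$-minimal itself), and then $\alpha(B)=\alpha(A)$ forces $\chi(B)=\chi(A)$. You instead route through $\beta$: Lemma 2.9 gives $\beta$-minimality with $\alpha(A)=\beta(A)$, and the identity $\beta(S)=2^{1/p}\chi(S)$, valid for \emph{all} bounded $S\subseteq l_p$, finishes the job; you then actually prove that identity via the formula $\chi(S)=\lim_n\sup_{x\in S}\|(I-P_n)x\|$ and a gliding-hump block decomposition, which is correct and makes the lemma self-contained. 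Your choice of $\beta$ over $\alpha$ as the intermediary is also the safer one: the unrestricted identity $\alpha(S)=2^{1/p}\chi(S)$ cannot hold for every bounded $S\subseteq l_p$ when $p>1$ (the unit ball has $\alpha=2$ and $\chi=1$), so the cited corollary must be understood as a statement about minimal sets --- precisely the reduction your argument makes explicit, whereas the paper's follow-up remark asserting the identity for all $A\subset l_p$ glosses over this. Your separate treatment of the compact case and of $p=1$ is harmless but not needed once $\beta=2^{1/p}\chi$ is established.
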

\begin{proof}
Let $A\subseteq l_p$ be an $\alpha$-minimal set. This means that for all infinite subsets $B\subseteq A$, we have that
$\alpha(B)=\alpha(A)$. Using Corollary 4.5 \cite{Ayerbe_knjiga}, we conclude that $\alpha (A)=2^{\frac{1}{p}} \chi (A)$. Let $B\subseteq A$ be an arbitrary infinite set. Then $\alpha (B)=\alpha (A)$ and $\alpha (A)=2^{\frac{1}{p}}\chi (A)$. Since every set $B$  is a subset of the $\alpha$-minimal set, the set $B$ is $\alpha$-minimal itself.  So, $\alpha (B)=2^{\frac{1}{p}}\chi(B)$. Now we have that $2^{\frac{1}{p}}\chi(B)=2^{\frac{1}{p}}\chi(A)$, i.e. $\chi(B)=\chi(A)$. Since the set $B$ is arbitrary, we conclude that the set $A$ is $\chi$-minimal set.
\end{proof}

The equality $\alpha(A)=2^{\frac{1}{p}}\chi(A)$ holds for all $A\subset l_p$ in the $l_p$ spaces ($1\leq p<\infty$). So, if the set is $\chi$-minimal, it is also $\alpha$-minimal. Since for $1< p<\infty$ the spaces $l_p$ are reflexive, measure of noncompactness $\chi$ is minimalizable and equality $\triangle'_{l_p,\chi}(\varepsilon)=\triangle_{l_p,\chi}(\varepsilon)$ holds. Since the modulus $\triangle_{l_p,\chi}(\varepsilon)$ is a subhomegenous function (\cite{Banas3}), we have that the modulus $\triangle'_{l_p,\chi}(\varepsilon)$ is also a subhomegenous function. Using Lemma \ref{L7}, in the case of $l_p$ spaces, the following holds:
\begin{align*}
\triangle'_{l_p,\alpha}(\varepsilon)& = \inf \{1-d(0,co(A)) \ | \ A\subseteq \overline{B}_{l_p}, \ A \textrm{ $\alpha$-minimal}, \ \alpha(A)>\varepsilon \}\\
& = \inf \{1-d(0,co(A)) \ | \ A\subseteq \overline{B}_{l_p}, \ A \textrm{ $\chi$-minimal}, \ \chi(A)>2^{-\frac{1}{p}}\varepsilon \}\\
& = \triangle'_{l_p,\chi}(2^{-\frac{1}{p}}\varepsilon)\\
& \leq 2^{-\frac{1}{p}}\triangle'_{l_p,\chi}(\varepsilon) \\
& < \triangle'_{l_p,\chi}(\varepsilon) \ .
\end{align*}
This example shows that in the relation (\ref{odnosmodulaprim2}) we can have strict inequality.

\begin{theorem}
If the Kuratowski measure of noncompactness $\alpha$ is minimalizable on a complete metric space $X$, then $$\Delta_{X,\alpha}(\varepsilon)=\Delta_{X,\beta}(\varepsilon).$$
\end{theorem}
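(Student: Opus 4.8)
The plan is to combine comparison results already established in the excerpt; no new construction is needed. Fix an admissible $\varepsilon$. First I would apply Theorem~\ref{Tmodulmodulprimminimizabilnost} twice: since $\alpha$ is minimalizable on $X$ by hypothesis, it yields $\triangle_{X,\alpha}(\varepsilon)=\triangle'_{X,\alpha}(\varepsilon)$; and since $\beta$ is minimalizable on every complete metric space, the same theorem yields $\triangle_{X,\beta}(\varepsilon)=\triangle'_{X,\beta}(\varepsilon)$.

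Next I would invoke Lemma~\ref{Lnejednakostmodulaprimalfabeta}, which gives $\triangle'_{X,\beta}(\varepsilon)\le\triangle'_{X,\alpha}(\varepsilon)$ --- recall that its proof only uses that every $\alpha$-minimal set is $\beta$-minimal with $\alpha(A)=\beta(A)$, so that the infimum defining $\triangle'_{X,\beta}$ runs over a family containing the one defining $\triangle'_{X,\alpha}$. Chaining the three relations produces
$$\triangle_{X,\beta}(\varepsilon)=\triangle'_{X,\beta}(\varepsilon)\le\triangle'_{X,\alpha}(\varepsilon)=\triangle_{X,\alpha}(\varepsilon).$$

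For the reverse inequality I would quote the classical ordering recorded in (\ref{produzenanejednakostmodula}), namely $\triangle_{X,\alpha}(\varepsilon)\le\triangle_{X,\beta}(\varepsilon)$; alternatively it is immediate from $\beta(A)\le\alpha(A)$ for every bounded $A$, since then $\{A\subseteq\overline{B}_X:\beta(A)>\varepsilon\}\subseteq\{A\subseteq\overline{B}_X:\alpha(A)>\varepsilon\}$ and an infimum over the smaller family cannot be smaller. Putting the two inequalities together gives $\triangle_{X,\alpha}(\varepsilon)=\triangle_{X,\beta}(\varepsilon)$, as claimed.

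I do not anticipate a genuine obstacle: every ingredient is already proved above and the argument is pure bookkeeping of inequalities. The only points needing a moment's care are checking that the minimalizability assumption on $\alpha$ is exactly the hypothesis of Theorem~\ref{Tmodulmodulprimminimizabilnost} over the whole admissible range of $\varepsilon$, and recalling that $\beta$ is minimalizable on complete metric spaces so that the second application of that theorem is legitimate.
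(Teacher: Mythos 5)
Your argument is correct and coincides with the paper's own proof: both apply Theorem~\ref{Tmodulmodulprimminimizabilnost} to $\alpha$ (by hypothesis) and to $\beta$ (minimalizable on every complete metric space), chain these with Lemma~\ref{Lnejednakostmodulaprimalfabeta} to get $\Delta_{X,\beta}(\varepsilon)\le\Delta_{X,\alpha}(\varepsilon)$, and close with the standard inequality (\ref{produzenanejednakostmodula}).
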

\begin{proof}
Since $\alpha$ is a minimalizable measure of noncompactness on a space $X$, using Theorem \ref{Tmodulmodulprimstrogaminimizabilnost} and Theorem \ref{Tmodulmodulprimminimizabilnost} we have that $\displaystyle \Delta_{X,\alpha}(\varepsilon)=\Delta_{X,\alpha}'(\varepsilon)$. Furthermore, since $\beta $ is a minimalizable measure of noncompactness on every complete metric space (Theorem 2.10. \cite{Ayerbe_knjiga}), we get that
$$\Delta_{X,\beta}'(\varepsilon)=\Delta_{X,\beta}(\varepsilon).$$
Based on the above and Lemma \ref{Lnejednakostmodulaprimalfabeta}, we conclude that
\begin{equation}\label{prva}
\Delta_{X,\alpha}(\varepsilon)\geq \Delta_{X,\beta}(\varepsilon).
\end{equation}
Using (\ref{produzenanejednakostmodula}) and (\ref{prva}) we have that
$$\Delta_{X,\alpha}(\varepsilon)=\Delta_{X,\beta}(\varepsilon) \ ,$$
which proves the claim.
\end{proof}

If we consider the assertion above in the contraposition, we can induce that if the moduli associated with the measures $\alpha$ and $\beta$ are different on some space, then the Kuratowski measure $\alpha$ is not minimalizable. So, $\alpha$ is not strictly minimalizable on the space either. Since
$$\Delta_{l_p,\alpha}(\varepsilon)=1-\left(1-\left(\frac{\varepsilon}{2}\right)^p\right)^\frac{1}{p}\neq 1-\left(1-\frac{\varepsilon^p}{2}\right)^\frac{1}{p}=\Delta_{l_p,\beta}(\varepsilon) \ ,$$
(Theorem 1.16. and Remark 1.17. , \cite{Ayerbe_knjiga}) for $p\geq 2$, we see that the measure of noncompactness $\alpha$ is not minimalizable on the space $l_p$.

\noindent It is known that for Day spaces $D_1$ and $D_\infty$ the following holds
$$\Delta_{D_1(D\infty),\alpha}(\varepsilon)=1-\left(1-\left(\frac{\varepsilon}{2}\right)^2\right)^\frac{1}{2}\neq 1-\left(1-\frac{\varepsilon^2}{2}\right)^\frac{1}{2}=\Delta_{D_1(D_\infty),\beta}(\varepsilon) \ ,$$
see \cite{Goebel-Sekowski}.
So, we get that the measure of noncompactness $\alpha$ is not minimalizable on these spaces.

Now we can define the characteristic for the modulus $ \triangle'_{X,\phi}$,
$$\varepsilon'_\phi(X)=\sup \{\varepsilon \ | \ \triangle'_{X,\phi}(\varepsilon)=0\} \ ,$$
analogously to how the characteristic of the modulus of noncompact convexity was defined. Because of (\ref{odnosmodulaimodulaprim}), it is clear that the following holds
$$\varepsilon'_\phi(X)\leq \varepsilon_\phi(X) \ .$$
Due to (\ref{generalnanejednakostmodulaprim}), we have the relation between these characteristics
$$\varepsilon'_\chi(X) \leq \varepsilon'_\alpha(X) \leq \varepsilon'_\beta(X) \leq \varepsilon_0(X) \ .$$

\section{Main result}
\begin{theorem}\label{Tneprekidnaodozdo}
Let $(X,d)$ be a metric space and let $\phi$ be a measure of noncompactness on $X$. Then the modulus $\triangle'_{X,\phi}(\varepsilon)$ is a continuous function from below on $[0,\phi (\overline{B}_X))$.
\end{theorem}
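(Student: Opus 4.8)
The plan is to use that $\triangle'_{X,\phi}$ is nondecreasing in $\varepsilon$, to reduce continuity from below at an interior point to one reverse inequality, and to establish that inequality by contradiction, manufacturing from near-optimal $\phi$-minimal witnesses at levels below $\varepsilon_0$ a single $\phi$-minimal witness admissible at level $\varepsilon_0$.

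\textbf{Monotonicity and reduction.} If $\varepsilon_1<\varepsilon_2$, every $\phi$-minimal $A\subseteq\overline{B}_X$ with $\phi(A)>\varepsilon_2$ also satisfies $\phi(A)>\varepsilon_1$, so the infimum defining $\triangle'_{X,\phi}(\varepsilon_2)$ runs over a smaller family, whence $\triangle'_{X,\phi}(\varepsilon_1)\le\triangle'_{X,\phi}(\varepsilon_2)$. Thus, for $\varepsilon_0\in(0,\phi(\overline{B}_X))$, the left limit $\ell:=\lim_{\varepsilon\uparrow\varepsilon_0}\triangle'_{X,\phi}(\varepsilon)=\sup_{\varepsilon<\varepsilon_0}\triangle'_{X,\phi}(\varepsilon)$ exists and $\ell\le\triangle'_{X,\phi}(\varepsilon_0)$; at $\varepsilon_0=0$ there is nothing to prove. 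So it suffices to show $\triangle'_{X,\phi}(\varepsilon_0)\le\ell$ for each such $\varepsilon_0$.

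\textbf{The contradiction and the approximating sets.} Suppose $\ell<\triangle'_{X,\phi}(\varepsilon_0)$ and fix $\eta>0$ with $\ell+\eta<\triangle'_{X,\phi}(\varepsilon_0)$. For each large $n$ we have $\triangle'_{X,\phi}(\varepsilon_0-\frac{1}{n})\le\ell<\triangle'_{X,\phi}(\varepsilon_0)-\eta$, so the defining infimum yields a $\phi$-minimal set $A_n\subseteq\overline{B}_X$ with $\phi(A_n)>\varepsilon_0-\frac{1}{n}$ and $1-d(0,co(A_n))<\triangle'_{X,\phi}(\varepsilon_0)-\eta$. If $\phi(A_n)>\varepsilon_0$ for some $n$, then $A_n$ is admissible in the infimum defining $\triangle'_{X,\phi}(\varepsilon_0)$ and forces $\triangle'_{X,\phi}(\varepsilon_0)\le 1-d(0,co(A_n))<\triangle'_{X,\phi}(\varepsilon_0)-\eta$, which is impossible; hence $\varepsilon_0-\frac{1}{n}<\phi(A_n)\le\varepsilon_0$ for all $n$, so $\phi(A_n)\to\varepsilon_0$. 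Since an infinite subset of a $\phi$-minimal set is again $\phi$-minimal with the same value of $\phi$ while its convex hull can only shrink (so $1-d(0,co(\cdot))$ does not increase), we may take each $A_n$ countably infinite. It then suffices to produce, out of $(A_n)$, one $\phi$-minimal set $A\subseteq\overline{B}_X$ with $\phi(A)>\varepsilon_0$ and $1-d(0,co(A))\le\triangle'_{X,\phi}(\varepsilon_0)-\eta$, contradicting the definition of $\triangle'_{X,\phi}(\varepsilon_0)$.

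\textbf{Expected main obstacle.} Constructing that set $A$ is the crux and the step I expect to give the most trouble; it is also precisely the point for which the modulus $\triangle'_{X,\phi}$, phrased through $\phi$-minimal sets, was introduced. The structural difficulty is that the hypothesis only ever supplies witnesses strictly below the critical level ($\phi(A_n)\le\varepsilon_0$), whereas an admissible witness must have $\phi$-measure strictly above it; and since $\phi$ need not be minimalizable, such a set cannot be had cheaply — a subset of some $A_n$ never has larger measure, a union containing an $A_n$ can never be $\phi$-minimal at a level exceeding $\phi(A_n)$, a Hausdorff-type limit of the $A_n$ tends only to reach the boundary value $\phi=\varepsilon_0$, and passing to unions pushes $1-d(0,co(\cdot))$ upward. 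When $\phi$ is homogeneous and translation invariant (as for $\alpha,\beta,\chi$) one can instead slightly dilate a near-optimal $\phi$-minimal set about the point of $\overline{co(A)}$ nearest $0$ and renormalise; but for a general measure on a general metric space the witness must be built by a careful amalgamation along a subsequence of $(A_n)$, using property (3) of a measure of noncompactness together with the subset-invariance of $\phi$ on $\phi$-minimal sets to make $A$ simultaneously $\phi$-minimal and of measure $\phi(A)>\varepsilon_0$, and using the uniform bound $d(0,co(A_n))\ge 1-\triangle'_{X,\phi}(\varepsilon_0)+\eta$ — after the subsequence has been chosen so that the relevant geometry has stabilised — to keep $co(A)$ at distance at least $1-\triangle'_{X,\phi}(\varepsilon_0)+\eta$ from $0$. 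Boundedness of $\overline{B}_X$ and the convex structure of $X$ are what make the amalgamation and the distance estimate feasible, and getting these to coexist — in particular handling the borderline case $\phi(A)=\varepsilon_0$ and preventing $co(A)$ from drifting toward $0$ when the $A_n$ sit in different positions — is where I expect the real work to lie.
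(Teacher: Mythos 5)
Your reduction via monotonicity is correct and your diagnosis of where the difficulty sits is accurate, but the proposal is not a proof: the one step that carries the theorem --- producing a single $\phi$-minimal set $A\subseteq\overline{B}_X$ with $\phi(A)>\varepsilon_0$ and $1-d(0,co(A))\le\triangle'_{X,\phi}(\varepsilon_0)-\eta$ --- is never carried out. The ``careful amalgamation along a subsequence'' you invoke for the general case is a wish rather than a construction, and your own preliminary analysis already explains why it cannot succeed by set-theoretic operations on the $A_n$ alone: subsets of the $A_n$ never increase the measure; any finite union is capped at $\max_n\phi(A_n)\le\varepsilon_0$ by axiom (3); and an infinite union whose measure does exceed $\varepsilon_0$ contains some $A_n$ with $\phi(A_n)\le\varepsilon_0$ as an infinite subset and therefore cannot be $\phi$-minimal. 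The threshold $\phi>\varepsilon_0$ is never crossed, so the contradiction is never reached and the argument stops exactly where the real work begins.

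What the paper actually does is the dilation you mention in passing and then set aside. Given a near-optimal $\phi$-minimal $A$ at a level $\varepsilon<\varepsilon_0$, it puts $k=1+\frac{1-d(0,A)}{2}>1$, forms $A^{*}=kA\cap\overline{B}_X$ (a subset of the $\phi$-minimal set $kA$, hence $\phi$-minimal, with $d(0,co(A^{*}))\ge d(0,co(A))$), and uses homogeneity to get $\phi(A^{*})=k\phi(A)>k\varepsilon>\varepsilon_0$ once $\varepsilon>\varepsilon_0\,/\,k$; this gives $\triangle'_{X,\phi}(\varepsilon_0)\le\triangle'_{X,\phi}(\varepsilon)+\eta$ directly, with no need for your contradiction scaffolding. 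You are right that this step uses scalar multiplication and the identity $\phi(kA)=k\phi(A)$, neither of which is available for an arbitrary measure of noncompactness on an arbitrary metric space, so your hesitation does expose that the paper's proof silently assumes a normed-space setting and a positively homogeneous measure. But having correctly identified the only mechanism that works, you declined to use it and proposed nothing that replaces it, so the theorem is not established by your text.
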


\begin{proof}
Let $\varepsilon _0\in [0,\phi (\overline{B}_X))$ be arbitrary and $\varepsilon <\varepsilon_0$. For arbitrary $\eta >0$ there exists $\phi$-minimal set $A\subset \overline{B}_X$, such that $\phi (A)>\varepsilon $ and
$$1-d(0, co (A))<\triangle'_{X,\phi}(\varepsilon)+\eta.$$
Let $\displaystyle k=1+\frac{1-d(0,A)}{2}$. Consider the set $A^*=kA\cap \overline{B}_X\subset \overline{B}_X$. $A^*$ is $\phi$-minimal set, as a subset of the $\phi$-minimal set $kA$. Besides, we have that
$$\phi (A^*)=\phi (kA)=k\phi (A)>k\varepsilon,$$ and
$$1-d(0,co (A^*))<1-d(0,co (A))<\triangle'_{X,\phi}(\varepsilon)+\eta.$$
Let $\displaystyle \delta =\varepsilon_0\left(1-\frac{1}{k}\right)$. Then for $\varepsilon \in (\varepsilon _0-\delta, \varepsilon_0)$
$$\phi (A^*)>k(\varepsilon _0-\delta)=\varepsilon _0,$$ holds and
$$\inf \{ 1-d(0,co(A^*)): A^*\subset \overline{B}_X,  \ \textrm{$A^*$ $\phi$-minimal }, \phi (A^*)>\varepsilon_0\}\leq \triangle'_{X,\phi}(\varepsilon)+\eta.$$
Since $\eta $ is arbitrary, we conclude that
\begin{equation}{\label{nejednakostdruganeprekidnost}}
\lim_{\varepsilon \rightarrow \varepsilon_0 -}\triangle'_{X,\phi}(\varepsilon)=\triangle'_{X,\phi}(\varepsilon_0).
\end{equation}
\end{proof}

\begin{theorem}\label{Tneprekidnaodozgo}
Let $(X,d)$ be a metric space and let $\phi$ be a minimalizable measure of noncompactness on $X$. The modulus $\triangle'_{X,\phi}(\varepsilon)$ is a continuous function from above on $[0,\phi (\overline{B}_X))$.
\end{theorem}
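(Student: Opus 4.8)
The plan is to prove the right‑hand continuity $\displaystyle\lim_{\varepsilon\to\varepsilon_0^{+}}\triangle'_{X,\phi}(\varepsilon)=\triangle'_{X,\phi}(\varepsilon_0)$ at every $\varepsilon_0\in[0,\phi(\overline{B}_X))$; combined with Theorem \ref{Tneprekidnaodozdo} this yields continuity of $\triangle'_{X,\phi}$ on all of $[0,\phi(\overline{B}_X))$, and then, invoking $\triangle'_{X,\phi}=\triangle_{X,\phi}$ from Theorem \ref{Tmodulmodulprimminimizabilnost}, continuity of $\triangle_{X,\phi}$ itself. The first step I would take is the elementary observation that $\triangle'_{X,\phi}$ is nondecreasing: increasing $\varepsilon$ only shrinks the family of admissible $\phi$-minimal sets over which the infimum is taken, so the value can only grow. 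In particular the one-sided limit $\lim_{\varepsilon\to\varepsilon_0^{+}}\triangle'_{X,\phi}(\varepsilon)$ exists and is $\ge\triangle'_{X,\phi}(\varepsilon_0)$, so only the reverse inequality needs an argument.

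For that reverse inequality I would proceed directly, in the same spirit as the proof of Theorem \ref{Tneprekidnaodozdo} but without its dilation step (there it was needed to push $\phi(A)$ above the larger value $\varepsilon_0$; here it is superfluous). Fix $\eta>0$. Because $\phi$ is minimalizable and $\phi(\overline{B}_X)>\varepsilon_0$, the ball $\overline{B}_X$ is infinite and the collection of $\phi$-minimal sets $A\subseteq\overline{B}_X$ with $\phi(A)>\varepsilon_0$ is nonempty; hence one can choose such an $A$ with $1-d(0,co(A))<\triangle'_{X,\phi}(\varepsilon_0)+\eta$. Set $\delta:=\phi(A)-\varepsilon_0>0$. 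For every $\varepsilon\in(\varepsilon_0,\varepsilon_0+\delta)$ one still has $\phi(A)>\varepsilon$, so this same $A$ is admissible in the definition of $\triangle'_{X,\phi}(\varepsilon)$, giving $\triangle'_{X,\phi}(\varepsilon)\le 1-d(0,co(A))<\triangle'_{X,\phi}(\varepsilon_0)+\eta$. Letting $\varepsilon\to\varepsilon_0^{+}$ and then $\eta\to0^{+}$, and combining with the monotonicity noted above, gives $\lim_{\varepsilon\to\varepsilon_0^{+}}\triangle'_{X,\phi}(\varepsilon)=\triangle'_{X,\phi}(\varepsilon_0)$, as required.

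I do not expect a serious obstacle in this argument; the one point that genuinely uses the hypothesis is guaranteeing, for each $\varepsilon_0<\phi(\overline{B}_X)$, the existence of a $\phi$-minimal subset of $\overline{B}_X$ of measure strictly larger than $\varepsilon_0$ — this is obtained by applying the defining property of a minimalizable measure to the infinite set $\overline{B}_X$ with a sufficiently small error term $\delta_0>0$, so that the resulting $\phi$-minimal set $B$ satisfies $\phi(B)\ge\phi(\overline{B}_X)-\delta_0>\varepsilon_0$. It is also worth remarking that, in contrast with Theorem \ref{Tneprekidnaodozdo}, no scaling of near-optimal sets is used here, so no additional structure on $X$ is required, and that working on the half-open interval $[0,\phi(\overline{B}_X))$ rather than the closed one is precisely what supplies the room to the right of $\varepsilon_0$ that the estimate needs.
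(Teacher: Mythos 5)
Your proof is correct and follows essentially the same route as the paper's: pick a near-optimal $\phi$-minimal set $A$ for $\varepsilon_0$, set $\delta=\phi(A)-\varepsilon_0$, and observe that $A$ remains admissible for every $\varepsilon\in(\varepsilon_0,\varepsilon_0+\delta)$. You streamline the argument by dropping the paper's redundant passage to a subset $B\subset A$ (which, as the paper itself notes, satisfies $\phi(B)=\phi(A)$ anyway since $A$ is $\phi$-minimal) and by making explicit the point the paper leaves implicit, namely that minimalizability is what guarantees the family of admissible $\phi$-minimal sets is nonempty.
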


\begin{proof}
Let $\eta >0$ and $\varepsilon_0\in [0,\phi (\overline{B}_X))$ be arbitrary and fixed. Using the definition of the modulus $\triangle'_{X,\phi}(\varepsilon_0)$, we conclude that there exists a $\phi$-minimal set $A\subseteq \overline{B}_X$, such that $\phi (A)>\varepsilon_0$ and
$$1-d(0,co (A))<\triangle'_{X,\phi}(\varepsilon)+\eta \ .$$
Let $\xi >0$ be arbitrary. Since $\phi$ is a minimalizable measure of noncompactness, there exists a $\phi$-minimal set $B\subset A$, such that
$$\phi (B)\geq \phi (A)-\xi \ .$$
But, since the set $A$ is $\phi$-minimal, we have that $\phi (A)=\phi (B)$. Let $\delta =\phi (A)-\varepsilon _0$ and let $\varepsilon \in (\varepsilon_0,\varepsilon_0+\delta)$ be arbitrary. Due to
$$1-d(0,co (B))<\triangle'_{X,\phi}(\varepsilon)+\eta \ ,$$
and $\phi (B)>\varepsilon$, we have
$$\inf \{ 1-d(0,co (B)): B\subseteq \overline{B}_X, B \textrm{ $\phi$-minimal}, \ \phi(B)>\varepsilon \}\leq \triangle'_{X,\phi}(\varepsilon_0)+\eta \ ,$$
or equivalently,
$$\triangle'_{X,\phi}(\varepsilon) \leq \triangle'_{X,\phi}(\varepsilon_0)+\eta \ .$$ Since $\eta $ is arbitrary, we have that
\begin{equation}{\label{nejednakostprvaneprekidnost}}
\lim_{\varepsilon \rightarrow \varepsilon_0 +}\triangle'_{X,\phi}(\varepsilon)=\triangle'_{X,\phi}(\varepsilon_0),
\end{equation}
which proves that the modulus $\triangle'_{X,\phi}(\varepsilon)$ is a continuous function from above on $[0,\phi (\overline{B}_X))$.
\end{proof}

Using Theorem \ref{Tneprekidnaodozdo} and Theorem \ref{Tneprekidnaodozgo} we conclude that the new modulus $\triangle'_{X,\phi}(\varepsilon)$ associated with the minimalizable measure of noncompactness $\phi$ is a continuous function on $[0,\phi (\overline{B}_X))$. Hence, the modulus of noncompact convexity $\triangle_{X,\phi}(\varepsilon)$ is equal to the modulus $\triangle'_{X,\phi}(\varepsilon)$ for the minimalizable measure of noncompactness $\phi$ and we get the following assertion.

\begin{corollary}\label{posljedicaneprekidnost}
Let $(X,d)$ be a metric space. The modulus $\triangle_{X,\phi}(\varepsilon)$ associated with the minimali-zable measure of noncompactness $\phi$ is a continuous function on $[0,\phi (\overline{B}_X))$.
\end{corollary}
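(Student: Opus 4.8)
The plan is to derive the statement directly from the results already established for the auxiliary modulus $\triangle'_{X,\phi}$, so that essentially no new work is needed. The governing observation is that, once $\phi$ is assumed minimalizable, Theorem \ref{Tmodulmodulprimminimizabilnost} identifies the two moduli: $\triangle_{X,\phi}(\varepsilon) = \triangle'_{X,\phi}(\varepsilon)$ for every $\varepsilon \in [0, \phi(\overline{B}_X)]$. Hence the continuity of $\triangle_{X,\phi}$ on $[0, \phi(\overline{B}_X))$ is equivalent to the continuity of $\triangle'_{X,\phi}$ there, and the latter is precisely what Theorems \ref{Tneprekidnaodozdo} and \ref{Tneprekidnaodozgo} deliver.

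Concretely, I would proceed in three short steps. First I would fix an arbitrary $\varepsilon_0 \in [0, \phi(\overline{B}_X))$ and invoke Theorem \ref{Tneprekidnaodozdo}, which gives that $\triangle'_{X,\phi}$ is continuous from below at $\varepsilon_0$; this holds for any measure of noncompactness, so no hypothesis on $\phi$ is used here. Second I would invoke Theorem \ref{Tneprekidnaodozgo} to obtain continuity from above at $\varepsilon_0$; it is at this step that the minimalizability of $\phi$ genuinely enters, since the argument must produce a $\phi$-minimal set whose measure still exceeds a threshold $\varepsilon$ slightly larger than $\varepsilon_0$. Combining the two one-sided limits, $\triangle'_{X,\phi}$ is continuous at $\varepsilon_0$; as $\varepsilon_0$ was arbitrary in $[0, \phi(\overline{B}_X))$ (at the left endpoint only the one-sided notion is relevant, and that is covered by Theorem \ref{Tneprekidnaodozgo}), $\triangle'_{X,\phi}$ is continuous on the whole interval. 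Third I would transport this conclusion back through the identity of Theorem \ref{Tmodulmodulprimminimizabilnost}, yielding continuity of $\triangle_{X,\phi}$ on $[0, \phi(\overline{B}_X))$.

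There is no real obstacle in the argument itself; the only point deserving care is the shape of the domain. Both the rescaling device behind Theorem \ref{Tneprekidnaodozdo} (replacing $A$ by $kA \cap \overline{B}_X$ with $k > 1$) and the threshold perturbation behind Theorem \ref{Tneprekidnaodozgo} rely on having strictly more room than $\varepsilon_0$ available inside $\overline{B}_X$, which is guaranteed exactly when $\varepsilon_0 < \phi(\overline{B}_X)$. Consequently the conclusion cannot, by this method, be pushed to the closed interval $[0, \phi(\overline{B}_X)]$, and one should not expect continuity at the right endpoint in general. Recognizing this and keeping the corollary stated on the half-open interval is the main thing to get right.
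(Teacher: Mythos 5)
Your proposal is correct and follows exactly the paper's own route: the paper likewise obtains continuity of $\triangle'_{X,\phi}$ on $[0,\phi(\overline{B}_X))$ by combining Theorem \ref{Tneprekidnaodozdo} (from below) with Theorem \ref{Tneprekidnaodozgo} (from above, where minimalizability is used), and then transfers it to $\triangle_{X,\phi}$ via the equality of Theorem \ref{Tmodulmodulprimminimizabilnost}. Your additional remarks about where minimalizability enters and why the interval must stay half-open are accurate and, if anything, more explicit than the paper's own two-line justification.
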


Clearly, Corollary \ref{posljedicaneprekidnost} also holds for the strictly minimalizable measure of noncompactness.

\end{document}